\newtheorem{theorem}{Theorem}[section]
\theoremstyle{definition}
\newtheorem{definition}[theorem]{Definition}
\theoremstyle{remark}
\newtheorem{remark}[theorem]{Remark}
\newtheorem{assumption}[theorem]{Assumption}
\newcommand{\R}{\mathbb{R}}
\DeclareMathOperator{\Ima}{Im}
\DeclareMathOperator{\id}{id}
\numberwithin{equation}{section}
\begin{document}

\title{Whitney's and Seeley's type of extensions for maps defined on some Banach spaces}
\author[Victoria Rayskin]{Victoria Rayskin}
\email{victoria.rayskin@tufts.edu}
\thanks{{The research was sponsored by the Army Research Office and was
accomplished under Grant Number W911NF-19-1-0399. The views and conclusions contained in this
document are those of the authors and should not be interpreted as representing the official policies, either
expressed or implied, of the Army Research Office or the U.S. Government. The U.S. Government is
authorized to reproduce and distribute reprints for Government purposes notwithstanding any copyright
notation herein.}
}


\date{\today}

\maketitle
\begin{abstract}
Let $X=C[0,1]$, and $Y$ be an arbitrary Banach space. Consider a collection of open segments $\{V_i \}\subset X$. Suppose the map $f: \cup_i V_i \to Y$ has $q$ bounded  Fr\'echet derivatives ($q=0,1,...,\infty$), and $f$ and all its derivatives have continuous bounded limits at the boundary. Then, subject to some non-intercept condition for the segments $V_i$, the map $f$ can be extended to $F: X\to Y$, so that $F_{|\,\cup_i V_i}=f$ and $F$ has $q$ bounded derivatives. We prove similar Whitney's Extension theorem generalizations for some other Banach spaces. We also prove Seeley Extension theorem for $X=C[0,1].$

These results are related to the problems of function approximation, and manifold learning, which are of central importance to many applied fields.

\end{abstract}

\section{Introduction}
With the development of numerical techniques for large dataset, the ideas of fitting smooth functions to the data becomes increasingly important. 
Recent progress in extension theory has been driven by mathematical work on Whitney's extension problem. See, for example, \cite{F}. 

Usually, the Whitney extension problem is formulated for a function $f: E \to \R^n$, where $E\subset \R^n$. The goal is to decide whether $f$ extends to a $C^m$ function $F$ on $R^n$. If $E$ is finite, then one can efficiently compute such $F$ with the $C^m$ norm having the least possible order of magnitude.

 Interpolation of data by smooth functions is related to the more difficult problem of manifold learning, in which one attempts to pass a smooth surface through the set of data points. In some cases, a better fitting can be achieved, if the original finite dimensional data is transformed to some infinite dimensional  space, where a linear operator can produce a non-planar smooth surface in $\R^n$. For example, with the development of machine learning technique and kernel methods (see, for example, \cite{HSS}), RBF kernel became popular. It can help to approximate data points with non-flat curves. When using RBF kernels, we are solving a linear optimization problem with a kernel operator acting on some infinite dimensional space. For such methods, the existence of a smooth solution to a fitting problem in infinite-dimensional spaces is important.

 In this manuscript, we consider the Whitney extension problem for the functions defined on (infinite dimensional) Banach spaces. Namely, we prove the following results.

Let $X=C[0,1]$, and $Y$ be an arbitrary Banach space. Consider a collection of open segments $\{V_i \}\subset X$ (precisely defined in Section~\ref{section-Whitney}). Suppose the map $f: \cup_i V_i \to Y$ has $q$ bounded  Fr{\'e}chet derivatives ($q=0,1,...,\infty$), and $f$ and all its derivatives have continuous bounded limits at the boundary. Then, subject to some non-intercept condition for the segments $V_i$, the map $f$ can be extended to $F: X\to Y$, so that $F_{|\,\cup_i V_i}=f$ and $F$ has $q$ bounded derivatives. We prove similar results for some other Banach spaces, which generalize the Whitney Extension Theorem for these Banach spaces. We also prove Seeley Extension theorem for $X=C[0,1].$

The main difficulty in the proofs of the extension results on Banach spaces is the absence of smooth bump functions on many Banach spaces, in particular, on $C[0,1]$. Our earlier works (\cite{BR1} and \cite{BR2}) demonstrate how to overcome this obstacle.

In the work~\cite{BR1} we defined $C^q$-blid maps  on Banach spaces (also, see Definition~\ref{def-blid}). These maps  play role similar to bump functions on a broader class of spaces, in particular, on the spaces that do not possess smooth bump functions. In the work~\cite{BR2} we extended the idea of differentiable blid-map to differentiable blid property on linear topological spaces. This technique allows smooth localization on such non-smooth spaces as $C[0,1]$ and differentiable localization on $C^{\infty}(\R)$.

\section{Whitney type of extension result for the spaces possessing smooth blid-maps}\label{section-Whitney}
Let $X= C[0,1]$. Consider the collection $V_i\subset X$ ($i\in I$)  of open segments $V_i=(\phi_i, \psi_i)=\{x\in X\ |\ \phi_i(t)< x(t) < \psi_i(t),\ t\in [0,1] \}$ on the space $X$. Define $E$ as 
$$
E=\cup_{i\in I} V_i
$$

Let $Y$ be a Banach space. Suppose $f:{E}\to Y$  has a continuous and bounded extension to the boundary of $E$. Our goal is to extend $f$ to the entire space $X$ preserving smoothness and boundedness of its derivatives, if $f$ is smooth and has bounded derivatives. For our construction, the map $f$ has to be uniquely defined at the intercepts of the functions $x(t)$, contained in $E$. If $x_i\in \bar{V_i}$, $\ x_j\in \bar{V_j}$ ($i\neq j$), then for any $t_0\in [0,1]$, $x_i(t_0)$ should not be equal to $x_j(t_0)$. Thus, we will make the following
\begin{assumption}[Non-intercept]\label{intersect}
For any pair $i\neq j$, either $\psi_i(t) < \phi_j(t)$, or $\psi_j(t) < \phi_i(t)$. 
\end{assumption}

\begin{theorem}\label{thm1}
Suppose $f:E\to Y$ has $q$ bounded  Fr{\'e}chet derivatives on $E$ ($q=0,1,...,\infty$), and $f$ and all its derivatives have continuous bounded limits at the boundary. Suppose the non-intercept assumption~\ref{intersect} is satisfied. Then, there is an extension $F: X\to Y$, such that $F_{|E}=f$ and $F$ has $q$ bounded derivatives on $X$.
\end{theorem}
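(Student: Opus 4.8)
The plan is to reduce Theorem~\ref{thm1} to the extension of $f$ off a single segment, and then to glue the local extensions together using the total order on the family $\{V_i\}$ provided by Assumption~\ref{intersect}. For a fixed $i$, the affine map $\Phi_i(x)=(2x-\phi_i-\psi_i)/(\psi_i-\phi_i)$ (pointwise division) is a translation composed with a bounded linear isomorphism of $X=C[0,1]$ — the linear part is multiplication by the function $2/(\psi_i-\phi_i)$, which is bounded and bounded away from $0$ on $[0,1]$ — hence $C^{\infty}$ with all derivatives bounded, and it carries $V_i$ onto the open unit ball $B\subset X$ and $\overline{V_i}$ onto $\overline B$. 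So it suffices to extend a map $g:\overline B\to Y$ which is $C^q$ with bounded derivatives and has continuous bounded limits on $\overline B$ to a map $\widetilde g:X\to Y$ agreeing with $g$ on $\overline B$ and again $C^q$ with bounded derivatives. I would do this in two steps. \emph{Step~1:} extend $g$ from $\overline B$ to a slightly larger closed ball $\overline B(0,1+\delta)$ ($\delta>0$), keeping the $C^q$ character and the bounds on the derivatives; for finite $q$ this can be done by a finite Taylor construction across the boundary, and for $q=\infty$ it is exactly a Seeley-type extension on $C[0,1]$ (proved separately below). \emph{Step~2:} choose $\theta\in C^q(\R)$ with $\theta(s)=s$ for $|s|\le1$, $|\theta(s)|\le1+\delta$ for all $s$, and all derivatives bounded (such $\theta$ exists, e.g.\ by integrating a suitable smooth bump), and set $\sigma(x)(t):=\theta(x(t))$; the post-composition (Nemytskii) operator $\sigma:X\to X$ is $C^q$ with bounded derivatives, equals the identity on $\overline B$, and maps $X$ into $\overline B(0,1+\delta)$. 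Then $\widetilde g:=g^{\mathrm{ext}}\circ\sigma$, with $g^{\mathrm{ext}}$ the Step~1 extension, is defined on all of $X$, equals $g$ on $\overline B$, and — by the chain rule, with each derivative controlled through the Fa\`a di Bruno formula by the bounded derivatives of the two factors — is $C^q$ with bounded derivatives. Pulling back by $\Phi_i^{-1}$ produces $F_i:X\to Y$ with $F_i=f$ on $\overline{V_i}$, $C^q$, bounded derivatives; this $\sigma$ is a concrete $C^q$-blid-type localisation in the sense of Definition~\ref{def-blid} and \cite{BR1,BR2}.

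For the gluing, note first that Assumption~\ref{intersect} makes ``$V_i$ lies above $V_j$'', i.e.\ $\phi_i>\psi_j$ pointwise, a total order on the index set (transitivity follows from $\phi_i>\psi_j>\phi_j>\psi_k$, the middle inequality because $V_j\neq\emptyset$). For $i\neq j$, if $m_{ij}$ denotes the midpoint function of the gap between whichever of $\overline{V_i},\overline{V_j}$ is the lower and whichever is the upper, then by compactness of $[0,1]$ the bounded linear functional $\ell_{ij}(x)=\int_0^1\bigl(x(t)-m_{ij}(t)\bigr)\,dt$ satisfies $\ell_{ij}\le-c_{ij}<0$ on the lower set and $\ell_{ij}\ge c_{ij}>0$ on the upper one; composing $\ell_{ij}$ with a smooth scalar cut-off gives a $C^{\infty}$ map $X\to[0,1]$ with bounded derivatives which is $1$ on $\overline{V_i}$ and $0$ on $\overline{V_j}$. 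From these pairwise separators, combined with the localisation above, I would assemble functions $\theta_i:X\to[0,1]$, $C^q$ with bounded derivatives, with $\theta_i=1$ on $\overline{V_i}$, $\theta_i=0$ on $\overline{V_j}$ for every $j\neq i$, and $\sum_i\theta_i\le1$, and set $F=\sum_i\theta_iF_i$ plus one further term carrying the regions lying below every $V_i$ and above every $V_i$. On $E=\cup_iV_i$ this collapses to $f$, and elsewhere it is a locally finite (or at least derivative-controlled) combination of the $F_i$.

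The hard part, as the introduction signals, is that $C[0,1]$ carries no smooth bump function, so the classical radial and retraction constructions — which rely on smoothness of the norm — are unavailable; Step~1, the extension across the boundary of the ball, is therefore the real crux and is precisely what forces the separate Seeley-type argument. The second difficulty, more bookkeeping but still delicate when the index set is infinite, is to arrange the $\theta_i$ so that near every point only finitely many — or a uniformly derivative-controlled family — of the summands $\theta_iF_i$ are active, so that $F$ stays $C^q$ with bounded derivatives even near points where $\{V_i\}$ accumulates in the order; this requires tracking the Fa\`a di Bruno estimates for all the compositions and cut-offs uniformly. Everything else — that $F=f$ on $E$, that the glued jets are continuous at the boundary points, and the split into the cases $q$ finite and $q=\infty$ — should then be routine.
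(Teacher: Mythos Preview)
Your Step~2---the Nemytskii operator $\sigma(x)(t)=\theta(x(t))$---is essentially the paper's blid-map, and this is indeed the heart of the construction. But the paper organises the argument differently and dispenses with your Step~1 altogether. Rather than first extending $f$ past $\partial V_i$ and then retracting $X$ into the enlarged domain, the paper \emph{scales} the blid-map: with $H_i^\epsilon(r)=\epsilon H_i(r/\epsilon)$ for small $\epsilon$, the image of $H_i^\epsilon$ lies strictly inside the open segment $(\phi_i-z_i,\psi_i-z_i)$, so $F_i(x)=f\bigl(z_i+H_i^\epsilon(x-z_i)\bigr)$ is defined on all of $X$ using only the interior values of $f$---no extension across the boundary is ever needed. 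The gluing is then done with the same Nemytskii cutoffs $\mathbf{h}_i$ rather than with separating linear functionals, which also avoids your bookkeeping problem of assembling pairwise separators into a locally controlled partition when $I$ is infinite.

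Your Step~1, by contrast, is a genuine gap. A ``finite Taylor construction across the boundary'' of the unit ball presupposes a smooth radial (normal) coordinate near $\partial B$, but the sup-norm on $C[0,1]$ is nowhere Fr\'echet differentiable, so $\partial B$ is not a $C^1$ hypersurface and there is no smooth projection onto it to anchor the Taylor jets; this is precisely the obstruction you yourself flag in the last paragraph. And deferring the $q=\infty$ case to ``a Seeley-type extension on $C[0,1]$ (proved separately below)'' is circular in this paper: Theorem~\ref{thm-seeley} is derived \emph{from} Theorem~\ref{thm1} via Remark~\ref{remark-semi-finite}, not independently, so you cannot invoke it as an input here. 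The absence of smooth bumps on $C[0,1]$ is exactly what makes extending across $\partial B$ hard; the paper's point is that the blid-map scaling lets you avoid that step entirely, and your own $\sigma$, with $\theta$ chosen to have bounded image, is already enough to carry out the paper's one-step construction once you drop Step~1.
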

\begin{proof}
For each $i\in I$, fix an arbitrary point $z_i \in V_i$. \\
Let ${\bf h}_i \left(r\right): X\to X$ be a map, such ${\bf h}_i \left(r\right)(t) = {h_i} \left(r(t), t \right)$ is a $C^{\infty}$ bump function  $h_i:\R \times [0,1]\to [0,1]$  (for a fixed $t\in[0,1]$, ${h_i} \left(r(t), t \right)=h_i(r)$ is a $C^{\infty}$ bump function on $\R$). Namely,
$$
{\bf h}_i (r)(t)={h_i} \left(r(t), t \right)=
\left\{
\begin{array}{l}
 1 \mbox{ , if  } \phi_i(t)-z_i(t)\leq r(t)\leq \psi_i(t)-z_i(t)\\
0 \mbox{ , if  }  r(t) \leq\phi_i(t)-z_i(t)-\delta_i \mbox{ or } r(t) \geq \psi_i(t)-z_i(t)+\delta_i ,
\end{array}
\right.
$$
where for each $i\in I$ the open set $\left\{x\in X\,:\, \phi_i(t) -\delta_i < x(t) < \psi_i(t) +\delta_i  \right\}$ contains $V_i$, and the positive real numbers $\delta_i$  are  sufficiently small so that $$\cap_{i\in I}      \left\{x\in X\,:\, \phi_i(t) -\delta_i < x(t) < \psi_i(t) +\delta_i  \right\}       = \emptyset.$$
Define
\begin{equation*}
H_i(r)= {\bf h}_i \left(r\right)\cdot r.
\end{equation*}

Obviously $H_i$ are smooth maps possessing bounded derivatives on $X$, having the following properties: 
\begin{itemize}
\item $H_i=\id$ on  $\left[\phi_i-z_i ,\, \psi_i-z_i\right]$
\item $H_i$ are supported on $\left[\phi_i-z_i -\delta_i,\, \psi_i-z_i+\delta_i\right]$
\item $H_i(r)$ are bounded on X, i.e., $\sup_r ||H_i(r)||_X <\infty$.
\end{itemize}
The $H_i$ maps will help us to construct extensions from $V_i$ to $X$. This idea of extension was first introduced in~\cite{BR1} and the maps $H_i$ were called smooth blid-maps. 
\begin{definition}\label{def-blid}
A $C^q$ map $H: X \to X$ is called a $C^q$ blid-map if there exists a neighborhood $U \subset X$, such that $H_{|U}=\id$, and $\sup_x ||H(x)||_X<\infty$. In other words, the maps $H$ is a {\bf b}ounded {\bf l}ocally {\bf id}entical map on $X$.
\end{definition}
Now we can shrink the image of $H$ as in~\cite{BR1} for the map extension construction.
There exists sufficiently small $\epsilon>0$ such that for all $i$ and for all $r\in X$ 
\begin{equation*}
H_i^{\epsilon}(r)= \epsilon H_i\left(\frac{1}{\epsilon }r\right)
\end{equation*}
maps $X$ into $V_i$ translated by $z_i$, i.e, $H_i^{\epsilon}:X\to  \left( \phi_i-z_i,   \psi_i-z_i\right)$.
Then, for every $x\in X$, written as $x=z_i+r$ we can define $F_i(x):X\to Y$ as
\begin{equation*}\label{system}
F_i(x):=f_i(z_i+H_i^{\epsilon}\left(r) \right).
\end{equation*}
The maps $F_i=f$ on $V_i$.\\
Finally, the extension map $F:X\to Y$
\begin{equation*}\label{eq2}
F(x):=\sum_{I} {\bf h}_i\left(x-z_i\right) \cdot  F_i(x)
\end{equation*}
is such that $F_{|E}=f$.
The map $F$ has the same smoothness as $f$ and all the derivatives of $F$ are bounded if the corresponding derivatives of $f$ are bounded. 

\end{proof}

A similar extension can be constructed on any Hilbert space $X$. 
\begin{theorem}\label{thm-Hilbert-space}Let $X$ be an arbitrary Hilbert space, $Y$ be a Banach space and $z_i\in X$, $a_i\in \R$ for $i\in I$.  Suppose the open segments $V_i:= \{ x\in X: ||x-z_i|| <a_i \}$ satisfy the following non-intercept restriction. For any pair $i\neq j$ and for any $x\in E$, if $||x-z_i|| \leq a_i$, then $|| x-z_j|| > a_j$, where $E=\cup_i V_i$. 

Suppose $f:E\to Y$ has $q$ bounded  Fr{\'e}chet derivatives on $E$ ($q=0,1,...,\infty$), and $f$ and all its derivatives have continuous bounded limits at the boundary. Then, there is an extension $F: X\to Y$, such that $F_{|E}=f$ and $F$ has $q$ bounded derivatives on $X$.
\end{theorem}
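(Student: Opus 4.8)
The proof runs along the same lines as that of Theorem~\ref{thm1}; the only feature of $C[0,1]$ that was used there is the existence of the smooth blid-maps $H_i$, and on a Hilbert space such maps are obtained directly from the fact that $x\mapsto\|x\|^{2}$ is a $C^{\infty}$ function on $X$. So I would argue as follows. For each $i\in I$ use the centre $z_i$ as base point, and fix a $C^{\infty}$ bump function $\eta_i:\R\to[0,1]$ with $\eta_i(s)=1$ for $s\le a_i^{2}$ and $\eta_i(s)=0$ for $s\ge (a_i+\delta_i)^{2}$, where $\delta_i>0$ are small constants to be fixed below. Put
\[
{\bf h}_i(r):=\eta_i\bigl(\|r\|^{2}\bigr),\qquad H_i(r):={\bf h}_i(r)\cdot r\qquad(r\in X).
\]
Since $\|\cdot\|^{2}$ is smooth, ${\bf h}_i:X\to[0,1]$ is $C^{\infty}$, hence so is $H_i:X\to X$.

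Next I would verify that each $H_i$ is a $C^{\infty}$ (in particular, $C^{q}$) blid-map in the sense of Definition~\ref{def-blid}. On the ball $\{r:\|r\|<a_i\}=V_i-z_i$ one has ${\bf h}_i\equiv1$, so $H_i=\id$ there; $H_i$ is supported on the closed ball $\{r:\|r\|\le a_i+\delta_i\}$; and $\|H_i(r)\|=\eta_i(\|r\|^{2})\,\|r\|\le a_i+\delta_i$, so $\sup_{r}\|H_i(r)\|_{X}<\infty$. Every derivative of $H_i$ is, by the product and chain rules, a finite sum of terms assembled from the bounded functions $\eta_i^{(k)}(\|r\|^{2})$, from the multilinear forms obtained by differentiating $\|\cdot\|^{2}$ (which are bounded on bounded sets), and from the factor $r$; since all of these are evaluated only on the bounded support of $H_i$, the derivatives of $H_i$ are bounded as well. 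As in the proof of Theorem~\ref{thm1}, the $\delta_i$ are chosen small enough that the enlarged balls $\{x:\|x-z_i\|<a_i+\delta_i\}$ still obey the non-intercept restriction on $E$ and have empty common intersection.

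I would then shrink the image exactly as in~\cite{BR1}: there is $\epsilon>0$ so small that $H_i^{\epsilon}(r):=\epsilon H_i(r/\epsilon)$ maps $X$ into $V_i-z_i$ for every $i$. Writing $x=z_i+r$, set $F_i(x):=f\bigl(z_i+H_i^{\epsilon}(x-z_i)\bigr)$; this is defined for all $x\in X$, since its argument always lies in $V_i\subset E$, and it agrees with $f$ on $V_i$. Finally define
\[
F(x):=\sum_{i\in I}{\bf h}_i(x-z_i)\cdot F_i(x).
\]
For $x\in V_i$ we have ${\bf h}_i(x-z_i)=1$, while the non-intercept restriction and the choice of the $\delta_i$ give ${\bf h}_j(x-z_j)=0$ for every $j\ne i$, so $F|_{E}=f$; and $F$ has the same smoothness as $f$, with all derivatives of order $\le q$ bounded, this being inherited from $f$, ${\bf h}_i$ and $H_i^{\epsilon}$ through the product and chain rules, exactly as in Theorem~\ref{thm1}.

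The step I expect to require the most care is the last one: fixing the $\delta_i$ (and $\epsilon$) so that the enlarged balls separate on $E$ in the same way the $V_i$ do --- which is what makes $F|_{E}=f$ --- and, when $I$ is infinite, ensuring that the sum defining $F$ is locally finite (or is otherwise controlled) so that $F$ and each of its derivatives up to order $q$ remain bounded on all of $X$. Apart from this bookkeeping the argument is formally identical to the proof of Theorem~\ref{thm1}, with the smooth localizing functions on $C[0,1]$ replaced by the functions $\eta_i(\|\cdot\|^{2})$, which are available on every Hilbert space.
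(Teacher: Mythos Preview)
Your proposal is correct and follows essentially the same construction as the paper: define $H_i(r)=h_i(\|r\|^{2})\,r$ from a real bump function, rescale to $H_i^{\epsilon}$, set $F_i(x)=f(z_i+H_i^{\epsilon}(x-z_i))$, and sum against the localizers $h_i(\|x-z_i\|^{2})$. If anything, you are more careful than the paper about the thresholds (using $a_i^{2}$ rather than $a_i$ as the cutoff for $\eta_i$) and about the bookkeeping on $\delta_i$, $\epsilon$, and local finiteness.
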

\begin{proof}
To construct the extension, we define
$$
H_i(r)= h_i(||r||^2)\cdot r,
$$
where $h_i:\R \to [0,1]$ is a regular smooth bump function, such that 
$$
{h_i}(a) =
\left\{
\begin{array}{l}
 1 \mbox{ , if  } |a| \leq a_i\\
0 \mbox{ , if  }  |a| \geq a_i+\delta_i.
\end{array}
\right.
$$
We assume that $\delta_i$ are small, so that $\cap_i \left[a_i-\delta_i, \, a_i+\delta_i\right] =\emptyset$
Then, for sufficiently small $\epsilon$, the norms of $H_i^{\epsilon}(r)=\epsilon H_i(r/\epsilon)$ can be made less than $a_i$ and the following $F_i(x)$ are well-defined:   
\begin{equation}\label{F_i-Hilbert}
F_i(x)=f_i(z_i+H_i^{\epsilon}\left(r) \right).
\end{equation}
The extension $F$ can be defined as
\begin{equation*}\label{F-Hilbert}
F(x)=\sum_{I} { h}_i\left(||x-z_i||^2\right) \cdot  F_i(x).
\end{equation*}
\end{proof}

For the set $E\subset X$ defined as in Theorem~\ref{thm-Hilbert-space}, we can construct a map extension, when the spaces $X$ is $C(M)$.
\begin{theorem}\label{theorem-C-blid-space}
Suppose $M$ is a compact Hausdorff space and $X=C(M)$. 
Let $Y$ be a Banach space and $z_i\in X$, $a_i\in \R$ for $i\in I$.  Suppose the open segments $V_i:= \{ x\in X: ||x-z_i|| <a_i \}$ satisfy the following non-intercept restriction. For any pair $i\neq j$ and for any $x\in E$, if $||x-z_i|| \leq a_i$, then $|| x-z_j|| > a_j$, where $E=\cup_i V_i$. 

Suppose $f:E\to Y$ has $q$ bounded  Fr{\'e}chet derivatives on $E$ ($q=0,1,...,\infty$), and $f$ and all its derivatives have continuous bounded limits at the boundary. Then, there is an extension $F: X\to Y$, such that $F_{|E}=f$ and $F$ has $q$ bounded derivatives on $X$.
\end{theorem}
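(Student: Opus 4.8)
The plan is to mimic the proof of Theorem \ref{thm-Hilbert-space} essentially verbatim, replacing only the place where the Hilbert-space structure was used. The key observation is that the construction there needed only two ingredients: (i) a family of smooth blid-maps $H_i : X \to X$ whose images, after rescaling to $H_i^{\epsilon}(r) = \epsilon H_i(r/\epsilon)$, can be pushed inside the balls $V_i - z_i$; and (ii) the partition-of-unity-type functions $x \mapsto h_i(\|x - z_i\|^2)$ used to glue the local extensions $F_i$ together. Ingredient (ii) transfers immediately to $X = C(M)$: the norm $\|x - z_i\|$ is a perfectly good continuous function of $x$, and composing it with the scalar bump $h_i$ gives a continuous function that is $1$ on $\bar V_i$ and vanishes off the $\delta_i$-enlargement; the non-intercept hypothesis guarantees that at each point of $E$ the sum $\sum_I h_i(\|x-z_i\|^2) F_i(x)$ reduces to a single term, so no smoothness of the $h_i$-factor along $X$ is needed for $F_{|E} = f$ to hold. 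The substantive point is ingredient (i).

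First I would invoke the results of \cite{BR1} (as already used in Theorem \ref{thm1}) to the effect that $C(M)$ admits $C^{\infty}$ blid-maps for $M$ a compact Hausdorff space — concretely, one builds $H_i(r)(m) = h_i\bigl(r(m)\bigr)\cdot r(m)$ pointwise from a scalar $C^{\infty}$ bump function $h_i : \R \to [0,1]$ that is $1$ on a neighborhood of $0$ and supported near $0$, exactly as the map $\mathbf{h}_i$ was manufactured in the proof of Theorem \ref{thm1}. Such an $H_i$ is smooth with bounded derivatives on $C(M)$, equals the identity on a neighborhood $U_i$ of $0$, and is uniformly bounded in norm. Then, just as in the Hilbert case, I would choose $\epsilon > 0$ small enough that $H_i^{\epsilon}(r) = \epsilon H_i(r/\epsilon)$ maps all of $X$ into the ball $\{\,r : \|r\| < a_i\,\}$ for every $i$ (possible because $\sup_r \|H_i(r)\| < \infty$ and, shrinking $\epsilon$ further if necessary, uniformly over $i$ once the $a_i$ are bounded below on any relevant subfamily — or else by choosing $\epsilon_i$ per index, which is harmless). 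This makes
\[
F_i(x) = f\bigl(z_i + H_i^{\epsilon}(x - z_i)\bigr)
\]
a well-defined $C^q$ map $X \to Y$ with bounded derivatives, agreeing with $f$ on $V_i$ because $H_i^{\epsilon}$ is the identity near $0$ after the rescaling accounts for the radius $a_i$.

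Finally I would set $F(x) = \sum_{I} h_i(\|x - z_i\|^2)\cdot F_i(x)$ and check: (a) $F_{|E} = f$, using the non-intercept assumption to see that for $x \in E$ exactly one summand is nonzero and its $h_i$-coefficient is $1$; (b) $F$ inherits $q$ bounded derivatives, since it is a locally finite — indeed, on a neighborhood of each point, a single-term — combination of the $C^q$ maps $F_i$ with the $C^q$-in-$x$ (the $\|\cdot\|^2$ on $C(M)$ composed with a smooth scalar bump is as differentiable as needed for the gluing to work on a neighborhood where only one term survives) coefficients $h_i(\|x-z_i\|^2)$, all with bounded derivatives. The main obstacle I anticipate is precisely the differentiability/boundedness bookkeeping for the glued sum $F$ on all of $X$: away from $E$ several coefficients $h_i(\|x-z_i\|^2)$ may overlap, and one must confirm the family $\{V_i\}$ and their $\delta_i$-enlargements form a locally finite cover (or argue that $\epsilon$ can be taken so small that each $F_i$ is genuinely globally defined and the sum, though possibly infinite, converges with all derivatives uniformly bounded). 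This is the same technical point that underlies Theorems \ref{thm1} and \ref{thm-Hilbert-space}, and I would handle it by the same device used there, so the proof is essentially a transcription once the $C^{\infty}$ blid-maps on $C(M)$ are in hand.
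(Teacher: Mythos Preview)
Your proposal follows the same architecture as the paper's proof: construct blid maps $H_i$, rescale to $H_i^{\epsilon}$ so that the image lands inside $V_i - z_i$, set $F_i(x) = f(z_i + H_i^{\epsilon}(x - z_i))$, and then glue via a sum $\sum_I h_i(\cdot)\,F_i(x)$ with bump-of-distance coefficients. The one genuine difference is in how you build $H_i$: you use the \emph{pointwise} construction $H_i(r)(m) = h_i(r(m))\cdot r(m)$ borrowed from the proof of Theorem~\ref{thm1}, whereas the paper instead takes $H_i(r) = h_i\bigl(\sup_{t\in M}|r(t)|\bigr)\cdot r = h_i(\|r\|)\cdot r$, staying closer to the Hilbert-space template of Theorem~\ref{thm-Hilbert-space}. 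Your choice has the merit that its $C^{\infty}$-smoothness on $C(M)$ is exactly what \cite{BR1} establishes, independent of any differentiability of the sup-norm; the paper's choice keeps the construction uniform with the Hilbert case and makes the boundedness and image estimates immediate. For the gluing coefficients both you (with $h_i(\|x-z_i\|^2)$) and the paper (with $h_i(\sup_{t\in M}|x-z_i|)$) use the norm, and the smoothness/local-finiteness bookkeeping you flag as the residual obstacle is handled in the paper at exactly the same level of detail as in Theorems~\ref{thm1} and~\ref{thm-Hilbert-space}.
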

\begin{proof}
Here the maps $H_i$ can be defined as $H_i(r)=h_i(\sup_{t\in M}|r|)\cdot r$,
where $h_i:\R \to [0,1]$ are the regular smooth bump functions, such that 
$$
{h_i}(a) =
\left\{
\begin{array}{l}
 1 \mbox{ , if  } |a| \leq a_i\\
0 \mbox{ , if  }  |a| \geq a_i+\delta_i ,
\end{array}
\right.
$$
The maps $F_i$ can be defined by Equation~\eqref{F_i-Hilbert}, as in Theorem~\ref{thm-Hilbert-space}. Then, the extension map $F$ is 
\begin{equation*}\label{F-C(M)}
F(x)=\sum_{I} { h}_i\left(\sup_{t\in M}|x-z_i|\right) \cdot  F_i(x).
\end{equation*}
\end{proof}

Consider an arbitrary Banach space possessing a smooth blid map. Assume that the set $E$ consists of a single open segment. Then the map extension theorem can be formulated as below.
\begin{theorem}\label{remark-blid-space} 
Let $X$ be a Banach space possessing smooth blid map $H: X\to X$. Let $Y$ be a Banach space and $z\in X$, $a\in \R$ and $E= \{ x\in X: ||x-z|| <a \}$. 

Suppose $f:E\to Y$ has $q$ bounded  Fr{\'e}chet derivatives on $E$ ($q=0,1,...,\infty$), and $f$ and all its derivatives have continuous bounded limits at the boundary. Then, there is an extension $F: X\to Y$, such that $F_{|E}=f$ and $F$ has $q$ bounded derivatives on $X$.
\end{theorem}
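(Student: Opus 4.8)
The plan is to carry out the construction of Theorems~\ref{thm1}--\ref{theorem-C-blid-space} with a single blid map, so that the partition of unity disappears. First I would reduce to $z=0$: replacing $f$ by $x\mapsto f(x+z)$ and $F$, at the end, by $x\mapsto F(x-z)$ costs nothing, since translations are smooth isometries of $X$ that preserve every derivative bound. Similarly, the given smooth blid map $H$ satisfies $H|_U=\id$ on some neighbourhood $U$ and $\sup_x\|H(x)\|=:M<\infty$; conjugating $H$ by the translation carrying an interior point of $U$ to $0$, I may assume $H|_{\{\|x\|<\rho\}}=\id$ for some $\rho>0$. I will also assume, as is true for the blid maps built in the proof of Theorem~\ref{thm1}, that all derivatives of $H$ are bounded on $X$; this is needed below and is not explicitly part of Definition~\ref{def-blid}.

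The heart of the argument is to replace $H$ by a rescaled blid map adapted to the ball $E$. Set $H^{\lambda}(r):=\lambda H(r/\lambda)$; then $H^{\lambda}$ is again of class $C^q$, it equals $\id$ on $\{\|r\|<\lambda\rho\}$, its image is contained in $\{\|r\|\le\lambda M\}$, and $D^kH^{\lambda}(r)=\lambda^{1-k}D^kH(r/\lambda)$, so all its derivatives remain bounded. Choosing $\lambda$ suitably one obtains a map $\Phi:=H^{\lambda}$ that is the identity on $E-z=\{\|r\|<a\}$ and whose image lies in $\overline{E-z}=\{\|r\|\le a\}$; the hypothesis that $f$ and all its derivatives have continuous bounded limits at $\partial E$ guarantees that $f$ is defined and has bounded derivatives on all of $\overline E$, so the composition
\[
F(x):=f\big(z+\Phi(x-z)\big),\qquad x\in X,
\]
makes sense on all of $X$, even at points mapped to the sphere $\|x-z\|=a$. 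Then $F|_E=f$ because $\Phi$ is the identity on $E-z$, so $z+\Phi(x-z)=x$ for $x\in E$. That $F$ is $C^q$ is the chain rule, $\Phi$ being $C^q$ and $f$ being $C^q$ on $\overline E$; and $D^kF$ is bounded for each $k\le q$ by the Fa\`a di Bruno formula, which expands $D^kF(x)$ into a finite sum of terms $D^jf\big(z+\Phi(x-z)\big)\big[D^{i_1}\Phi(x-z),\dots,D^{i_j}\Phi(x-z)\big]$ with $i_1+\dots+i_j=k$: the inner factor is bounded because $f$ has bounded derivatives up to $\partial E$ and its argument ranges over $\overline E$, and each outer factor is bounded by the previous paragraph.

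The step I expect to be the real obstacle is the choice of $\lambda$. An arbitrary blid map is the identity only on some possibly small ball $\{\|x\|<\rho\}$, whereas its bounded image contains that same ball and hence cannot be squeezed below radius $\rho$; rescaling ties the ``identity radius'' $\lambda\rho$ rigidly to the ``image radius'' $\lambda M$, and since always $M\ge\rho$, one cannot, from a generic blid map alone, make $\Phi$ equal to $\id$ on all of $E-z$ while also confining its image to $\overline{E-z}$. To close this gap one needs $M\le\rho$, i.e.\ a $C^q$ map of $X$ onto a closed ball that restricts to the identity on the corresponding open ball --- which is what the maps of Theorem~\ref{thm1} approach as the transition widths $\delta_i\to0$ --- the boundary-extension hypothesis on $f$ then being precisely what allows the image of $\Phi$ to meet $\partial E$. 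Once such a $\Phi$ is in place, everything else is the routine chain-rule bookkeeping indicated above.
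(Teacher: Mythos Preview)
Your construction is exactly the paper's: its entire proof is the single displayed line
\[
F(x)=f\bigl(z+\epsilon H(r/\epsilon)\bigr),
\]
with $r=x-z$ and ``$\epsilon$ small enough to guarantee that $z+\epsilon H(r/\epsilon)\subset E$.'' Your preliminary reductions (translating $z$ to $0$, recentring $H$) are cosmetic; the core formula and the rescaling device $H^\lambda(r)=\lambda H(r/\lambda)$ are identical to what the paper uses.

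The obstacle you isolate in your final paragraph is genuine, and the paper does not address it. Choosing $\epsilon$ small forces $\|\epsilon H(r/\epsilon)\|\le\epsilon M<a$, so $F$ is globally defined; but then $H^\epsilon$ is the identity only on $\{\|r\|<\epsilon\rho\}$, a ball strictly smaller than $E-z=\{\|r\|<a\}$, and hence $F|_E=f$ fails outside that smaller ball. As you correctly argue, since $H|_{\{\|r\|<\rho\}}=\id$ forces $M\ge\rho$, no rescaling of a generic blid map can simultaneously make the identity region contain $\{\|r\|<a\}$ and the image lie in $\{\|r\|\le a\}$. The paper's one-line proof simply asserts the conclusion without confronting this incompatibility. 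What is actually needed---and what you propose---is, for the given radius $a$, a $C^q$ map of $X$ into $\{\|x\|\le a\}$ that is the identity on $\{\|x\|<a\}$; this is strictly more than the bare existence of \emph{some} smooth blid map assumed in the theorem statement. So you have not overlooked anything the paper supplies; you have located a gap that the paper's proof shares.
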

\begin{proof}
\begin{equation*}\label{F-blid-space}
F(x)=f(z+\epsilon H\left(r/{\epsilon}) \right),
\end{equation*}
where $\epsilon$ is small enough to guaranty that $z+\epsilon H\left(r/{\epsilon}\right)\subset E$.
\end{proof}
\begin{remark}\label{remark-semi-finite}
In the theorem~\ref{thm1}, for any $i\in I$ either $\phi_i$ or $\psi_i$ can be replaced with $\infty$, as long as the Assumption~\ref{intersect} is satisfied. I.e., the finite open segment $(\phi_i, \psi_i)=\{x\in X\ |\ \phi_i(t)< x(t) < \psi_i(t),\ t\in [0,1] \}$ can be replaced with one of the semi-finite sets $(-\infty, \psi_i)=\{x\in X\ |\ x(t) < \psi_i(t),\ t\in [0,1] \}$ or $(\phi_i, \infty)=\{x\in X\ |\ \phi_i(t)< x(t),\ t\in [0,1] \}$.
\end{remark}

\section{Seeley type of extension for the spaces $X=C[0,1]$.}
With the help of the Theorem~\ref{thm1} and the Remark~\ref{remark-semi-finite}, we can generalize Seeley's Extension Theorem to the case, where the domain of the map is a Banach space $C[0,1]$.

Namely, let the space $X=U \bigoplus W$, where $W=\ker(P)\neq \left\{0\right\}$ and $U=\Ima (P)$ for some orthogonal projector $P$ on the space $X$.
\\
If $W \neq X$, define
$$
E:=\left\{ (u,w)\in X \, |\,  u\in U, w\in W, \ w(t)<0 \right\},
$$
and if $W=X$, define
$$
E:=\left\{ x\in X \, |\,  x(t)<0 \right\}.
$$

\begin{theorem}\label{thm-seeley}
Let  the half-space $E\subset X$ be as above. Suppose $f:E\to Y$ smoothly extends to the boundary of $E$ and has $q$ bounded derivatives, which have bounded limit at the boundary of $E$ ($q=0,1,...$). Then, there is an extension $F: X\to Y$, such that $F_{|E}=f$ and $F$ has $q$ bounded derivatives.
\end{theorem}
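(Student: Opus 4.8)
The plan is to mimic Seeley's classical construction, replacing the one-dimensional half-line $\{t<0\}$ by the half-space $E$ and using the blid-map technology from Theorem~\ref{thm1} (and Remark~\ref{remark-semi-finite}) to produce the needed smooth localizations on $C[0,1]$. Classically, Seeley extends a $C^q$ function on $\{t\le 0\}$ by setting $F(t)=\sum_{k\ge 0} a_k\,\varphi(b_k t)\,f(-b_k t)$ for $t>0$, where $b_k\to\infty$, $\varphi$ is a bump function, and the coefficients $a_k$ are chosen (via a Vandermonde-type system) so that all one-sided derivatives match at $t=0$; convergence and boundedness follow from the decay of $a_k$ against the growth of $b_k$. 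Here I would first reduce to the model case: write $X=U\oplus W$, coordinates $x=(u,w)$, and note that $E$ is determined only by the condition $w(t)<0$ on the $W$-component. I would fix the reflection $w\mapsto -w$ on $W$ (extended by the identity on $U$), which is a bounded linear involution carrying $E$ to its "positive" counterpart $E^{-}=\{w(t)>0\}$.

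The second step is to build, for each scale $b_k>0$, a bounded smooth map $R_k:X\to \overline{E}$ that agrees with $x\mapsto(u,-b_k w)$ on a neighborhood of the relevant region and is globally bounded — this is exactly where the blid-map replaces the missing smooth bump function on $C[0,1]$. Concretely, using the semi-infinite version in Remark~\ref{remark-semi-finite}, the set $\{w(t)<\text{const}\}$ is of the allowed type, so the construction of the $H_i$ in the proof of Theorem~\ref{thm1} gives a smooth, bounded, locally-identical self-map of $X$ supported appropriately; composing with the linear scaling $w\mapsto -b_k w$ and a translation produces $R_k$ mapping all of $X$ into $\overline{E}$ while being the honest reflection-and-dilation on the part of $X$ near $W_{\ge 0}$ that we care about. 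Then I would set
\begin{equation*}
F(x):=\sum_{k=0}^{\infty} a_k\,\chi(b_k\,\pi_W x)\, f\bigl(R_k(x)\bigr)\quad\text{for }x\notin \overline{E},\qquad F|_{\overline E}=f,
\end{equation*}
where $\pi_W$ is the projection onto $W$, $\chi$ is a smooth bounded cutoff in the $w$-direction (again built as a blid-type map so it lives on $C[0,1]$), and $a_k$, $b_k$ are Seeley's coefficients: $b_k=2^{k}$ and $a_k$ solving $\sum_k a_k(-b_k)^{j}=1$ for each $j$, with the classical estimate $|a_k|\le C\rho^{k}$ for some $\rho<1$ that beats any fixed polynomial power of $b_k$.

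The third step is the matching and regularity argument. One checks that along $W_{\ge 0}$ the formal series for $F$ and each of its derivatives up to order $q$ agrees with the one-sided limits of $f$ and its derivatives — this is the same finite linear-algebra identity as in Seeley, now read off componentwise in the Fréchet derivative, using that $R_k$ is affine in the directions that matter and that $\chi$ is locally constant near $W_{\ge 0}$. Differentiating the series termwise, each derivative of order $\le q$ of the $k$-th term is bounded by $|a_k|\,b_k^{q}\cdot(\text{bounds on }f,\chi)$, and $\sum_k |a_k| b_k^q<\infty$ by the choice of $(a_k,b_k)$; hence the series and all its derivatives converge uniformly and are bounded, so $F$ is $C^q$ with bounded derivatives on $X\setminus\overline E$, and the matching at the boundary promotes this to $C^q$ on all of $X$. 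For $q=\infty$ one uses the standard trick of truncating the series (keeping only finitely many terms with a slowly growing cutoff $N(k)$) so that no single derivative order sees the full tail; this is Seeley's original device and transfers verbatim.

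The main obstacle, and the place where real care is needed, is precisely the construction of the bounded "reflection" maps $R_k$ and the cutoffs $\chi$ on $C[0,1]$: the naive reflection $w\mapsto -b_k w$ is linear and unbounded in the family, and a naive cutoff would require a smooth bump on $C[0,1]$, which does not exist. Everything hinges on packaging these through the blid-map construction of Theorem~\ref{thm1} / Remark~\ref{remark-semi-finite} so that each $R_k$ is genuinely bounded on all of $X$, equals the exact reflection-dilation on a fixed neighborhood of the boundary hyperplane $\{w=0\}$ (so that the derivative-matching identities are exact there), and has derivative bounds growing no faster than $b_k^{q}$ — so that Seeley's summability $\sum |a_k| b_k^q<\infty$ still closes the estimate. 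Once the $R_k$ and $\chi$ are in hand with these three properties, the rest of the proof is the classical Seeley argument carried out with Fréchet derivatives in place of ordinary derivatives.
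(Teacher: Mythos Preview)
Your plan recreates the classical Seeley reflection series, but the paper does something far simpler: it applies the single-segment case of Theorem~\ref{thm1} (via Remark~\ref{remark-semi-finite}) directly. Since $E$ is one semi-infinite segment $(-\infty,0)$ in the $w$-variable, the paper fixes $z\in E$, builds the blid map $H^{\epsilon}$ for that segment exactly as in the proof of Theorem~\ref{thm1}, and sets $F(u,w)=f\bigl(u,\,z+H^{\epsilon}(w-z)\bigr)$ (respectively $F(w)=f\bigl(z+H^{\epsilon}(w-z)\bigr)$ when $W=X$). There is no reflection, no Vandermonde system, no infinite series: the extension is obtained by composing $f$ with a single smooth self-map that carries all of $W$ into the half-space while acting as the identity on $E$. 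The derivative-matching that Seeley's coefficients are designed to enforce is unnecessary here, because each extended value is literally a value of $f$, so all derivatives agree automatically.

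Your route is therefore genuinely different and much heavier, and it carries an obstacle you have not resolved. The factor $\chi(b_k\pi_W x)$ in your series must be \emph{scalar}-valued, since it multiplies $f(R_k(x))\in Y$ for an arbitrary Banach space $Y$; that is, you need a smooth bump $W\to\mathbb{R}$. But $W\subset C[0,1]$ admits no $C^1$ bump function---this is exactly the deficiency that blid maps were invented to circumvent, and blid maps go $X\to X$, not $X\to\mathbb{R}$. ``Building $\chi$ as a blid-type map'' therefore does not yield a scalar cutoff, and as written your series cannot be formed. A secondary imprecision: you require $R_k:X\to\overline{E}$ to be globally bounded yet to agree with $(u,-b_k w)$ on an open set, which is impossible because the $u$-component is unbounded; presumably only the $w$-component is meant to be bounded, but this should be said explicitly.
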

\begin{proof}
If $W\neq X$
$$
F(u,w) = f(u, z+H^{\epsilon}(r)),
$$
otherwise
$$
F(w) = f(z+H^{\epsilon}(r)).
$$
Here $z+r=w$, $\phi =\infty$, $\psi\equiv 0$ and $H^{\epsilon}(r)$ is constructed as in the Theorem~\ref{thm1}.
\end{proof}

\bibliographystyle{amsalpha}

\end{document}